\theoremstyle{plain}
\newtheorem{thm}{Theorem}
\newtheorem{lemma}{Lemma}
\newtheorem{cor}{Corollary}
\newtheorem{prop}{Proposition}
\theoremstyle{definition} \theoremstyle{definition}
\theoremstyle{remark}
\newcommand{\GG}{\mathbb{G}}
\newcommand{\Q}{\mathbb{Q}}
\newcommand{\wT}{\widehat{T}}
\newcommand{\wG}{\widehat{G}}
\newcommand{\wQ}{\widehat{Q}}
\newcommand{\wpi}{\widehat{\pi}}
\newcommand{\wchi}{\widehat{\chi}}
\newcommand{\wrho}{\widehat{\rho}}
\newcommand{\Z}{\mathbb{Z}}
\newcommand{\C}{\mathbb{C}}
\newcommand{\Hom}{{\rm Hom}\,}
\def\SL{{\rm SL}}
\def\Tor{{\rm Tor}}
\def\GL{{\rm GL}}
\def\PGL{{\rm PGL}}
\def\SO{{\rm SO}}
\begin{document}

\title[Half the sum of positive roots, Coxeter and  Kostant]
{Half the sum of positive roots, the Coxeter element, and a theorem of Kostant}

\author{Dipendra Prasad}

\address{School of Mathematics, Tata Institute of Fundamental
Research, Colaba, Mumbai-400005, INDIA}
\email{dprasad@math.tifr.res.in}
\maketitle

\begin{abstract}
Interchanging character and co-character groups of a torus $T$ over a field $k$ introduces 
a contravariant functor $T \rightarrow \wT$. 
Interpreting $\rho:T\rightarrow \C^\times$, half the sum of positive roots for
$T$  a maximal torus in a simply connected semi-simple group $G$ (over $\C$) using this duality, we get a co-character
$\wrho: \C^\times \rightarrow \wT$  whose value at $e^{\frac{2 \pi i }{h}}$ ($h$ the Coxeter number)
is the Coxeter conjugacy class
of the dual group $\wG$. 
This point of view gives a rather transparant proof of a 
 theorem of Kostant on the character values of irreducible 
finite dimensional representations of $G$ at the Coxeter element: the proof amounting to 
the fact that in $\wG_{sc}$, the simply connected cover of $\wG$, there is a unique 
regular conjugacy class whose image in $\wG$ has order $h$ (which is the Coxeter conjugacy class).

\end{abstract}

\vspace{4mm}

The theorem of Kostant in the title of this paper is Theorem 2 in [Ko], and  deals with the character of a finite dimensional representation of a  semi-simple algebraic group $G$ over $\C$ at a very special element, the Coxeter element. Recall that one usually defines a Coxeter element --- or, rather a conjugacy class --- 
in a Weyl group (as a product of simple reflections), in this case in $N(T)/T$, where $T$ is a maximal torus in $G$, with $N(T)$ its normalizer in $G$.
The first observation is that if we lift this conjugacy class in $N(T)/T$ to $N(T)$ arbitrarily, we get a well-defined conjugacy class in $G$; we will denote this conjugacy class in $G$ to be $c(G)$, and call it the Coxeter conjugacy class in $G$. 
The conjugacy class $c(G)$ in $G$ has the distinguishing property that it is the unique regular 
semi-simple conjugacy class in $G$ (i.e., the connected component of its  centralizer in $G$ is a torus) such that its image in $G/Z$ is 
of the smallest order in the adjoint group $G/Z$. Throughout the paper we will assume without loss of generality
that $G$ is a semi-simple simply connected group over $\C$ with $T$ a fixed 
maximal torus in $G$.  (Since $c(G)$ and $zc(G)$ are conjugate in $G$ for any central element $z$ of $G$, any irreducible representation of $G$ 
with nonzero character value on $c(G)$ must have trivial central character. However, character values 
in this paper are calculated via Weyl character
formula which involves the character $\rho$, half the sum of positive roots, and this character is nontrivial on $Z$, so it does not help us to assume $G$ adjoint.)

Here is the theorem of Kostant for which we will offer another proof which will also give a more precise information on these character
values (which is also contained in Kostant's proof although he does not spell it out).

\begin{thm} Let $G$ be a semi-simple simply connected  group over $\C$, and $\pi$ a finite dimensional irreducible representation of $G$. 
Then the character $\Theta_\pi$ of $\pi$ at the element $c(G)$ takes one of the values $1,0,-1$.
\end{thm}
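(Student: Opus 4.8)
The plan is to evaluate the Weyl character formula at $c(G)$ and to reinterpret the resulting expression, via the character/co-character duality, as a ratio of Weyl denominators on the dual group $\wG$. Since (by the main construction) the class $c(G)$ is represented by the regular element $t_0 = \exp(2\pi i \rho^\vee/h) \in T$, where $\rho^\vee$ is half the sum of the positive coroots, a character $\mu \in X^*(T)$ takes the value $\mu(t_0) = e^{2\pi i \langle \mu, \rho^\vee\rangle /h}$. Writing $\lambda$ for the highest weight of $\pi$ and applying the Weyl character formula, I would express
\[
\Theta_\pi(c(G)) = \frac{\sum_{w \in W} \epsilon(w)\, e^{2\pi i \langle w(\lambda+\rho),\, \rho^\vee\rangle/h}}{\sum_{w \in W} \epsilon(w)\, e^{2\pi i \langle w\rho,\, \rho^\vee\rangle /h}}.
\]

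First I would observe that, after the substitution $\langle w\mu, \rho^\vee\rangle = \langle \mu, w^{-1}\rho^\vee\rangle$ and the relabelling $w \mapsto w^{-1}$, both sums become Weyl denominators for $\wG$. Recalling that $X^*(\wT) = X_*(T)$, that half the sum of the positive roots of $\wG$ equals $\rho^\vee$, and writing $\Delta_{\wG} := \sum_{w}\epsilon(w)\,e^{w\rho^\vee} = \prod_{\beta>0}(e^{\beta/2}-e^{-\beta/2})$ for the Weyl denominator of $\wG$, the denominator is $\Delta_{\wG}(x)$ at $x = \wrho(e^{2\pi i/h}) = \exp(2\pi i \rho/h)$, while the numerator is $\Delta_{\wG}(y)$ at $y = \exp\!\big(2\pi i(\lambda+\rho)/h\big) \in \wT$. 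By the main construction $x$ is the Coxeter class $c(\wG)$, which is regular, so the denominator is nonzero and the whole expression is well defined.

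It then remains to analyse the numerator $\Delta_{\wG}(y)$. If $y$ is not regular, then $\beta(y)=1$ for some root $\beta$ of $\wG$, the product defining $\Delta_{\wG}(y)$ vanishes, and hence $\Theta_\pi(c(G)) = 0$. If instead $y$ is regular, I claim $y$ is $\wG$-conjugate to $x = c(\wG)$. Granting this, $y = w\cdot x$ for some $w \in W$, whence $\Delta_{\wG}(y) = \epsilon(w)\,\Delta_{\wG}(x)$ and $\Theta_\pi(c(G)) = \epsilon(w) = \pm 1$. Together with the previous case this produces exactly the three values $1,0,-1$.

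The heart of the matter, and the step I expect to be the main obstacle, is proving that a regular $y = \exp\!\big(2\pi i(\lambda+\rho)/h\big)$ must be the Coxeter class of $\wG$ --- this is precisely the uniqueness asserted in the abstract. Since $\lambda+\rho \in X_*(\wT)$ we have $y^h = 1$, so $y$, lifted to $\wG_{sc}$, is an element of order dividing $h$; I would analyse it through the theory of elements of finite order, writing its conjugacy class as $\exp(2\pi i\xi)$ with $\xi$ in the closed fundamental alcove and recording nonnegative integers $s_i = \langle \alpha_i, h\xi\rangle$ for the simple roots $\alpha_i$ of $\wG$ together with $s_0 = h - \langle\theta, h\xi\rangle$ for the highest root $\theta = \sum_{i\ge 1} a_i\alpha_i$. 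These satisfy $\sum_{i=0}^{l} a_i s_i = h$, where $a_0=1$ and the marks $a_i$ sum to the Coxeter number $\sum_{i=0}^{l} a_i = h$. Regularity of $y$ forces every $s_i \ge 1$; combined with $a_i \ge 1$ and $\sum a_i s_i = \sum a_i = h$, this forces $s_i = 1$ for all $i$, which are exactly the coordinates of the Coxeter element. Thus a regular $y$ is necessarily the Coxeter class, completing the argument. The delicate points to get right are the lattice bookkeeping in the duality identification of the two Weyl denominators, the passage between $\wG$ and its simply connected cover $\wG_{sc}$ in the counting of orders, and verifying that the $\wG$-conjugacy $y = w\cdot x$ contributes exactly the Weyl sign $\epsilon(w)$.
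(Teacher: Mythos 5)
Your proposal is correct and follows essentially the same route as the paper: evaluate the Weyl character formula at the regular class $c(G)$, use the character/co-character duality to transfer the numerator and denominator to torsion points of the dual torus, conclude vanishing when the transferred element is singular, and otherwise invoke the uniqueness (up to $W$) of the regular class of order $h$ --- the Coxeter class --- to get the value $\pm1$ as a Weyl-group sign. The one place you go beyond the paper is in actually proving that uniqueness via Kac coordinates ($\sum a_i s_i = h = \sum a_i$ with all $s_i \ge 1$ forces $s_i \equiv 1$), a fact the paper asserts but does not spell out; just be careful that the lift of $y$ to $\wG_{sc}$ need not have order dividing $h$, though your alcove computation only needs $y^h=1$ in the adjoint group $\wG$, where it is valid.
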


We recall that to a reductive algebraic group $G$ over $\C$, there is associated 
the dual group 
$\widehat{G}$ which is a reductive algebraic group over $\C$ with root datum which is the dual to that of $G$. 
Fix  a maximal torus $T$ in $G$, and 
a maximal 
torus $\widehat{T}$ in $\widehat{G}$, such 
that  there is a canonical isomorphism between the character
group of $T$ and the co-character group of $\widehat{T}$, and as a result we have the identifications
$$\widehat{T}(\C) = {\rm Hom} [\C^\times, \widehat{T}] \otimes_\Z  \C^\times  = 
{\rm Hom}[T,\C^\times]\otimes_\Z \C^\times,$$ 
all the homomorphisms being algebraic. Thus given a 
character $\chi: T \rightarrow \C^\times$, it gives rise to a co-character,
 $$\widehat{\chi}: \C^\times \longrightarrow  \widehat{T}(\C)= {\rm Hom}[T,\C^\times]\otimes_\Z \C^\times, $$ 
 given by $z \longrightarrow \chi \otimes z$.

For  a semi-simple simply connected algebraic group $G$ over $\C$, let $\rho$ be  half
the sum of positive roots of a maximal torus $T$ in $G$ (for any fixed choice of positive roots). 
It is clear from the definition of $\rho$ that 
the pair $(T, \rho)$ is well-defined up to
conjugacy by $G(\C)$; in particular, the restriction of the character $\rho$ to $Z$, the center of $G(\C)$, is a well defined
character of $Z$ which is of order $\leq 2$, to be denoted by $\rho_{Z}: Z \rightarrow \Z/2$. 
By (Pontrajagin) duality, we get 
a homomorphism $\rho^\vee_{Z}: \Z/2 \rightarrow Z^\vee$ 
where $Z^\vee$ denotes the character group of $Z$. 

Let $\wG_{sc}$ be the simply connected cover  of $\wG$ whose center can be identified to
$Z^\vee$, the character group of $Z$. We shall see later that the image of the nontrivial element 
in $\Z/2$ under the  homomorphism $\rho^\vee_{Z}: \Z/2 \rightarrow Z^\vee$ gives an important element 
in the center of $\wG_{sc}$ which determines whether an irreducible selfdual representation of $\wG_{sc}$ is 
orthogonal or symplectic.

Let $T'=T/Z$ be the maximal torus in the adjoint group $G/Z$, sitting in the following exact sequence:
$$0 \longrightarrow Z \longrightarrow T(\C) \longrightarrow T'(\C)\longrightarrow 0.$$
It is easy to see that any homomorphism from $Z$ to $\C^\times$ can be extended to an algebraic character of $T$, therefore we have an exact 
sequence:
$$0 \longrightarrow \Hom[T'(\C),\C^\times]  \longrightarrow \Hom[T(\C), \C^\times] \longrightarrow \Hom[Z,\C^\times]\longrightarrow 0.$$

Tensoring this exact sequence by  $\C^\times$, we get an exact sequence:
$$0 \longrightarrow \Tor^1[Z^\vee, \C^\times]  \longrightarrow \Hom[T'(\C), \C^\times] \otimes \C^\times \longrightarrow 
\Hom[T(\C),\C^\times] \otimes\C^\times \longrightarrow 0.$$
Since for any finite abelian group $A$, $\Tor^1[A,\C^\times] \cong A$, this gives,
$$0 \longrightarrow Z^\vee  \longrightarrow \widehat{T'}(\C) \longrightarrow  \wT(\C) 
\longrightarrow 0.$$

Similarly we have the exact sequence:
$$0 \longrightarrow \Z/2  \longrightarrow \C^\times \stackrel{2}\longrightarrow \C^\times 
\longrightarrow 0.$$

Now consider the following pair of short exact sequences,

 $$
\begin{CD}
0 @>>>   Z @>>>  T(\C) @>>>   T'(\C)  @>>> 0\\ 
& & @V \rho|_{Z} VV @V \rho VV @V 2\rho VV \\
0 @>>> \Z/2 @>>> 
 \C^\times @> 2 >>  \C^\times  @>>> 0.
\end{CD}
$$
Applying the functor $\Hom[-,\C^\times]$ followed by $\otimes \C^\times$ as before, we get the following commutative diagram of exact 
sequences, 

 $$
\begin{CD}
0 @>>>   
Z^\vee @>>>  \widehat{T'}(\C) @>>>   \wT(\C)  @>>> 0\\ 
& & @A AA  @A 2\wrho AA  @A \wrho AA \\
0 @>>> 
\Z/2  @>>> 
 \C^\times @> 2>>  \C^\times  @>>> 0.
\end{CD}
$$
\vspace{4mm}

which is part of the following larger diagram:

 $$
\begin{CD}
0 @>>>   
Z^\vee @>>>  \wG_{sc}(\C) @>>>   \wG(\C)  @>>> 0\\ 
& & @A AA  @AAA  @A  AA \\
0 @>>>   
Z^\vee @>>>  \widehat{T'}(\C) @>>>   \wT(\C)  @>>> 0\\ 
& & @A AA  @AAA  @A  AA \\
0 @>>> 
\Z/2 @>>> 
 \C^\times @> 2>>  \C^\times  @>>> 0.
\end{CD}
$$
\vspace{4mm}

We simplify the above diagram to:
 $$
\begin{CD}
0 @>>>   Z^\vee @>>>  \wG_{sc}(\C) @>>>   \wG(\C)  @>>> 0\\ 
& & @A AA  @AAA  @A  AA \\
0 @>>> 
\Z/2 @>>> 
 \C^\times @> 2>>  \C^\times  @>>> 0,
\end{CD}
$$
\vspace{4mm}
and then modify to:

$$
\begin{CD}
0 @>>>   Z^\vee @>>>  \wG_{sc}(\C) @>>>   \wG(\C)  @>>> 0\\ 
& & @A \wrho_Z AA  @A \tilde{\phi} AA  @A  \phi AA \\
0 @>>> 
\Z/2 @>>> 
 \SL_2(\C) @> >>  \PGL_2(\C)  @>>> 0,\\
& & @A AA  @AAA  @A  AA \\
0 @>>> 
\Z/2 @>>> 
 \C^\times @> 2>>  \C^\times  @>>> 0.
\end{CD}
$$
\vspace{4mm}

The main point about this commutative diagram is the fact that the homomorphisms $\phi$ and $\tilde{\phi}$ 
correspond (via Jacobson-Morozov theorem) to regular unipotent conjugacy classes in $\wG(\C)$ and $\wG_{sc}(\C)$ respectively.

\vspace{4mm}

\begin{prop} 
The homomorphisms $\phi$ and $\tilde{\phi}$ 
correspond to regular unipotent classes in $\wG(\C)$ and $\wG_{sc}(\C)$ 
respectively, and the element 
$\phi( e^{2\pi i /h}) \in  \wG(\C)$ 
represents  the Coxeter conjugacy class in the adjoint group
$\wG(\C)$.

\end{prop}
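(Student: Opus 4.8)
The plan is to reduce everything to two computations inside $\wT$: that the cocharacter through which $\tilde\phi$ and $\phi$ factor is the \emph{principal} one, and that its value at $e^{2\pi i/h}$ is regular of order exactly $h$. Throughout I identify $X^*(T)=X_*(\wT)$, so that $\wrho$ is the cocharacter of $\wT$ attached to $\rho$; since the coroots of $\wG$ are the roots of $G$, this cocharacter is nothing but $\check\rho_{\wG}$, the half-sum of the positive coroots of $\wG$ (equivalently $\sum_i\check\omega_i$), which lies in $X_*(\wT)$ precisely because $\wG$ is adjoint. I may assume $\wG$ simple, so that the Coxeter number $h$ is defined.

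First I would show that $\tilde\phi$ and $\phi$ are principal. Reading off the left column of the final diagram, the diagonal torus $z\mapsto\mathrm{diag}(z,z^{-1})$ of $\SL_2$ maps under $\tilde\phi$ to the cocharacter $2\wrho$ of $\wT_{sc}$, so the neutral element of the associated $\mathfrak{sl}_2$-triple is $H=d(2\wrho)(1)\in\widehat{\ttt}$, which satisfies $\alpha_i(H)=\langle\alpha_i,2\wrho\rangle=2\langle\alpha_i,\check\rho_{\wG}\rangle=2$ for every simple root $\alpha_i$ of $\wG$. Writing $E=\tilde\phi\left(\begin{smallmatrix}0&1\\0&0\end{smallmatrix}\right)$, the relation $[H,E]=2E$ forces $E$ into the sum of the simple root spaces, and the further relation $[E,F]=H=2\check\rho$ (a strictly dominant element, hence with all fundamental-coweight coordinates nonzero) forces every simple-root component of $E$ to be nonzero; thus $E$ is regular nilpotent and $\tilde\phi\left(\begin{smallmatrix}1&1\\0&1\end{smallmatrix}\right)$ is regular unipotent in $\wG_{sc}$. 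Projecting along the central isogeny $\wG_{sc}\to\wG$, which preserves unipotent classes, gives the same statement for $\phi$.

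Next I would analyze $t:=\phi(e^{2\pi i/h})$. Commutativity of the right column identifies this with $\wrho(e^{2\pi i/h})\in\wT$. For a root $\alpha$ of $\wG$ one has $\alpha(t)=\zeta^{\langle\alpha,\wrho\rangle}=\zeta^{\mathrm{ht}(\alpha)}$ with $\zeta=e^{2\pi i/h}$, where $\mathrm{ht}(\alpha)=\langle\alpha,\check\rho_{\wG}\rangle$ is the height of $\alpha$ relative to the simple roots of $\wG$. Since every root of a simple system satisfies $1\le|\mathrm{ht}(\alpha)|\le h-1$ (the highest root having height $h-1$), no $\alpha(t)$ equals $1$; hence $t$ is regular semisimple with centralizer $\wT$. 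Moreover, as $\wG$ is adjoint, $X^*(\wT)$ is the root lattice, so $t^{n}=1$ already requires $\alpha_i(t^{n})=\zeta^{n\langle\alpha_i,\wrho\rangle}=\zeta^{n}=1$, whence $h\mid n$; conversely $t^{h}=1$. So $t$ has order exactly $h$ in $\wG$.

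It remains to identify this regular element of order $h$ with the Coxeter class, and this is the step carrying the real content. Here I would invoke the characterization recalled at the start of the paper: in the adjoint group the Coxeter class is the unique regular semisimple conjugacy class whose order is as small as possible, namely $h$. Granting that $h$ is this minimal order and that such a class is unique — the classical input from Springer's theory of regular elements of Weyl groups (the Coxeter element being the regular element of order $h$ carrying a regular $\zeta$-eigenvector) together with Kostant's work — the element $t$ produced above must represent the Coxeter class. I expect the minimality-and-uniqueness of the order-$h$ regular class to be the main obstacle; everything preceding it is a direct computation in the cocharacter lattice of $\wT$, whereas this last point is exactly where the structure theory of regular elements, equivalently the uniqueness statement highlighted in the abstract, has to be brought in.
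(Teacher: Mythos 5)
Your argument is correct and shares its backbone with the paper's proof: both hinge on the identity $\langle\rho,\alpha^\vee\rangle=1$ for simple roots $\alpha$, which identifies $\wrho$ with the cocharacter $\tau$ of $\wT$ pairing to $1$ with every simple root of $\wG$, i.e.\ with the principal cocharacter. The difference lies in how the remaining claims are discharged. The paper, having shown $\tau=\wrho$, quotes Kostant [Ko2] for everything else: that $\tau$ belongs to the principal $\PGL_2$ (so $\phi$ and $\tilde\phi$ correspond to regular unipotent classes) and that $\tau(e^{2\pi i/h})$ represents the Coxeter class. You instead prove regularity of the nilpotent by hand ($[H,E]=2E$ confines $E$ to the sum of simple root spaces, and $[E,F]=2\check\rho$ forces every component of $E$ to be nonzero — though note that what this step actually requires is that $2\check\rho$ have all coefficients nonzero in the \emph{simple-coroot} basis, which holds because the inverse Cartan matrix has positive entries, rather than nonzero fundamental-coweight coordinates as you wrote), and you identify the Coxeter class by verifying directly that $\wrho(e^{2\pi i/h})$ is regular semisimple of order exactly $h$ and then appealing to the uniqueness of the regular class of order $h$ in the adjoint group. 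That uniqueness is precisely the characterization of the Coxeter class the paper states without proof in its introduction, so your route trades one external citation (Kostant's explicit computation in [Ko2]) for another (the Springer--Kostant uniqueness of the regular class of minimal order); what it buys is that the regularity and the exact order of $\phi(e^{2\pi i/h})$ — facts the rest of the paper also leans on — are made explicit rather than left inside the citation.
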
 

\begin{proof}Let $\tau: \GG_m(\C) \rightarrow \wT(\C)$ be the cocharacter whose action on all simple root spaces 
of $\widehat{B} < \wG$ is via the identity  maps $\GG_m(\C) \rightarrow \GG_m(\C)$. This cocharacter $\tau$ is part of the 
principal $\PGL_2(\C)$ inside $\wG(\C)$, and it is part of Kostant's work in [Ko2] that  
$\tau( e^{2\pi i /n}) 
\in  G^\vee(\C)$ is the Coxeter conjugacy class in the adjoint group
$\wG(\C)$. It suffices then to observe that $\tau = \rho^\vee$ 
which is nothing but the well-known identity:
$$ 2 \langle \rho, \alpha \rangle  =  \langle \alpha,\alpha \rangle $$
for all $+ve$ simple roots $\alpha$ in $G_{sc}$ which is the same as,
$$ \langle \rho, \alpha^\vee \rangle  =  1,$$
i.e.,
$$ \rho \circ  \alpha^\vee = 1: \GG_m(\C) \stackrel{\alpha^\vee}\longrightarrow T(\C) \stackrel{\rho} \longrightarrow\GG_m(\C),$$ 
which by applying duality is the same as:

$$ \widehat{\alpha}^\vee \circ \wrho = 1: \GG_m(\C) \stackrel{\wrho}\longrightarrow \wT(\C) \stackrel{\widehat{\alpha}^\vee} \longrightarrow \GG_m(\C),$$
proving that $\tau = \rho^\vee$. \end{proof}

\begin{cor}  The image of the homomorphism $\rho_{Z}^\vee: \Z/2 \rightarrow Z^\vee$ 
defines an element $z_0$ of order 1 or 2 in the
center of $\wG_{sc}$ which acts on an irreducible  self-dual representation $\pi$ of $\wG_{sc}$ by 1 if and only the 
representation $\pi$ carries a nondegenerate symmetric bilinear form.
\end{cor}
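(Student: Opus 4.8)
The plan is to show that the scalar by which $z_0$ acts on $\pi$ is precisely the Frobenius--Schur sign of $\pi$, so that it equals $+1$ exactly when the invariant form is symmetric. First I would recall the generalities. As $\pi$ is irreducible and self-dual, the space of $\wG_{sc}$-invariant bilinear forms on $\pi$ is one-dimensional, so there is a nondegenerate invariant form $B$, unique up to scalar; the transposed form $B'(u,v)=B(v,u)$ is again invariant and nondegenerate, hence $B'=\epsilon B$ with $\epsilon\in\{\pm1\}$, where $\epsilon=+1$ means $\pi$ is orthogonal and $\epsilon=-1$ means $\pi$ is symplectic. By the commutative diagram preceding the Proposition, $z_0=\rho^\vee_Z(-1)=\tilde{\phi}(-I)$ is the image under the principal $\SL_2$ of $-I\in\SL_2(\C)$, and since $z_0\in Z^\vee=Z(\wG_{sc})$ it acts on $\pi$ by a scalar $\zeta\in\{\pm1\}$. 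Thus the corollary is equivalent to the identity $\epsilon=\zeta$, and the whole proof reduces to establishing this.

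To compute $\zeta$ and $\epsilon$ together I would restrict everything to the principal $\SL_2$ given by $\tilde{\phi}$ (which is principal by the Proposition). Let $v_+$ be a highest weight vector of $\pi$ of highest weight $\lambda$. Being annihilated by all the simple root vectors, $v_+$ is annihilated by the regular nilpotent of the principal $\SL_2$, hence is a highest weight vector for this $\SL_2$; it generates a copy of $\Sym^N$, where $N=\langle\lambda,2\rho^\vee\rangle\geq 0$ is its weight for the principal grading. Put $\sigma=\tilde{\phi}\left(\begin{smallmatrix}0&1\\-1&0\end{smallmatrix}\right)$, so that $\sigma^2=\tilde{\phi}(-I)=z_0$ and conjugation by $\sigma$ inverts the principal torus. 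Then $\sigma v_+$ is the lowest weight vector of this $\Sym^N$, and in particular it is homogeneous of principal weight $-N$.

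The key step, and the only place where a genuine argument is needed, is that $\sigma v_+$ is proportional to the lowest weight vector $v_-$ of $\pi$, i.e. to the weight space for $-\lambda=w_0\lambda$. This is exactly where the regularity of $2\rho^\vee$ is used: every weight $\mu$ of $\pi$ can be written $\mu=-\lambda+\sum_i c_i\alpha_i$ with all $c_i\geq 0$, so $\langle\mu,2\rho^\vee\rangle=-N+2\sum_i c_i\geq -N$, with equality only when $\mu=-\lambda$. Hence $-\lambda$ is the unique weight of $\pi$ of minimal principal weight $-N$, and since $\sigma v_+$ is homogeneous of principal weight $-N$ it must lie in the one-dimensional space $\C v_-$; it is nonzero because $\sigma$ is invertible. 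I expect this uniqueness step to be the main obstacle to get exactly right; everything else is formal.

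Finally I would combine the pieces. Since $B$ pairs the weight spaces of $\lambda$ and $-\lambda$ nondegenerately, $B(v_+,\sigma v_+)=c\,B(v_+,v_-)\neq 0$ for the nonzero constant $c$ above. On the one hand, invariance of $B$ under $\sigma$ together with $\sigma^2 v_+=z_0 v_+=\zeta v_+$ give $B(v_+,\sigma v_+)=B(\sigma v_+,\sigma^2 v_+)=\zeta\,B(\sigma v_+,v_+)$; on the other hand the definition of $\epsilon$ gives $B(v_+,\sigma v_+)=\epsilon\,B(\sigma v_+,v_+)$. As $B(\sigma v_+,v_+)\neq 0$, these force $\epsilon=\zeta$. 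Therefore $z_0$ acts by $+1$ precisely when $\pi$ carries a nondegenerate symmetric form, which is the assertion of the corollary.
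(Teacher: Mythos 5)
Your argument is correct, but it is worth pointing out that it does considerably more than the paper does at this point: the paper's entire proof is a one-line appeal to the ``well-known consequence'' that the central element $z_0=\tilde{\phi}(-I)$ coming from a principal $\SL_2$ computes the Frobenius--Schur indicator, whereas you actually prove that fact. Your route is the standard one and it is sound: the highest weight vector $v_+$ is a highest weight vector for the principal $\SL_2$ of principal weight $N=\langle\lambda,2\rho^\vee\rangle$; the regularity of $2\rho^\vee$ (concretely, $\langle\alpha_i,\rho^\vee\rangle=1$ for every simple root, which is exactly the identity the paper's Proposition turns on) forces $-\lambda$ to be the unique weight of minimal principal weight, so $\sigma v_+$ is a nonzero multiple of the lowest weight vector $v_-$; and since the invariant form $B$ pairs the one-dimensional $\pm\lambda$ weight spaces nondegenerately, the two evaluations of $B(v_+,\sigma v_+)$ --- once via $\sigma$-invariance and $\sigma^2=z_0$, once via the symmetry type $\epsilon$ --- give $\epsilon=\zeta$. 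The only implicit hypothesis you should make explicit is self-duality, i.e.\ $w_0\lambda=-\lambda$, which you do use when identifying the lowest weight with $-\lambda$ and when asserting that an invariant bilinear form exists at all; with that said, the proof is complete. What your version buys is a self-contained verification (and, as a byproduct, the classical formula that the indicator is $(-1)^{\langle\lambda,2\rho^\vee\rangle}$, equivalently the value of $\lambda$ on $z_0$); what the paper's citation buys is brevity, deferring to Kostant's principal-$\SL_2$ machinery already invoked in the Proposition.
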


\begin{proof}
This is a well-known consequence of the fact that the homomorphism  $\tilde{\phi}: \SL_2(\C) \rightarrow \wG_{sc}(\C)$ 
corresponds to the regular unipotent class in $\wG_{sc}(\C)$.
\end{proof}

\vspace{4mm}

\begin{cor} The character $\rho:T\rightarrow \C^\times$ is trivial on $Z(G)$ if and only if $z_0=1$, which is if and only if the Coxeter conjugacy class of the adjoint group $\wG$ lifts to an element of the simply 
connected group $\wG_{sc}$ of the same oder.
\end{cor}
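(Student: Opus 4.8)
For the first biconditional, $\rho|_Z$ trivial $\iff z_0=1$, the plan is to invoke nothing more than Pontryagin duality at the level of the finite group $Z$. The homomorphism $\rho_Z\colon Z\to\Z/2$ and its dual $\rho_Z^\vee\colon\Z/2\to Z^\vee$ are adjoint under the perfect pairing $Z\times Z^\vee\to\C^\times$, so one is trivial exactly when the other is. Since $z_0$ is by definition the image under $\rho_Z^\vee=\wrho_Z$ of the nontrivial element of $\Z/2$, we get $z_0=1$ iff $\rho_Z^\vee$ is trivial iff $\rho_Z$ is trivial, i.e.\ iff $\rho$ is trivial on $Z$. This step is formal.

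For the second biconditional I would work with the explicit lift of the Coxeter class coming from the principal $\SL_2$. By the Proposition the class $c=\phi(e^{2\pi i/h})\in\wG$ is the Coxeter class and has order $h$. Lifting the parameter $e^{2\pi i/h}$ first through the squaring map $\C^\times\xrightarrow{2}\C^\times$ of the bottom row and then through $\tilde\phi$, I set $g=\tilde\phi(\mathrm{diag}(e^{\pi i/h},e^{-\pi i/h}))\in\wG_{sc}$, which is a preimage of $c$. The decisive computation is $g^{h}=\tilde\phi(\mathrm{diag}(-1,-1))=\tilde\phi(-I)$; by commutativity of the left-hand square of the last displayed diagram, in which $\{\pm I\}\subset\SL_2$ is the kernel $\Z/2$ and $\wrho_Z$ is the left vertical arrow, this equals $\wrho_Z$ of the nontrivial element, namely $z_0$. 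Thus $g^{h}=z_0$.

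Granting $g^{h}=z_0$, one direction is immediate: if $z_0=1$ then $g^{h}=1$, so $g$ has order dividing $h$, and since its image $c$ already has order $h$ the lift $g$ has order exactly $h$. For the converse I would use that the exponent of the center $Z^\vee\cong Z(G)$ divides the Coxeter number $h$; granting this, the $h$-th power map on $Z^\vee$ is identically trivial, so every lift $zg$ (with $z\in Z^\vee$) satisfies $(zg)^{h}=z^{h}g^{h}=z_0$. Hence every lift has order $h\cdot\mathrm{ord}(z_0)$, which equals $h$ precisely when $z_0=1$ and equals $2h$ otherwise. In particular the Coxeter class admits a lift of order $h$ if and only if $z_0=1$, which closes the equivalence.

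The main obstacle is the arithmetic input that $h$ annihilates $Z^\vee$, equivalently that the exponent of $Z(G)$ divides the Coxeter number. I would either cite this as a classical fact read off the classification of root systems, or extract it from Kostant's description of the principal element; as a consistency check I note that $z_0\neq1$ already forces $|Z^\vee|$ even and hence $h$ even (the odd-Coxeter-number types $A_{2k}$ have centers of odd order), so the parity bookkeeping in $(zg)^{h}$ causes no trouble. Everything else in the argument reduces to the single identity $g^{h}=z_0$ together with the duality used in the first biconditional.
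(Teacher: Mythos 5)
Your argument is correct, and it supplies exactly the proof the paper omits: the corollary is stated without proof, with only the commutative diagram involving $\tilde{\phi}$, $\phi$, $\wrho_Z$ (and a later, also unproved, Proposition on lifting Coxeter elements) standing in for one. Your two steps match that intended mechanism precisely: the first biconditional is the tautology $z_0=\rho_Z\in Z^\vee$, and the identity $g^h=\tilde{\phi}(-I)=\wrho_Z(-1)=z_0$ is exactly what the commutativity of the left-hand square of the paper's diagram encodes. You are also right to flag the one genuine external input, needed for the converse direction: a lift of $c$ of order $h$ exists iff $z_0\in (Z^\vee)^h$, so one must rule out $z_0$ being a nontrivial $h$-th power in $Z^\vee$. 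The fact that the exponent of $Z(G)\cong Z^\vee$ divides the Coxeter number $h$ is true and classical (checked type by type: $A_{n-1}$ has center $\Z/n$ with $h=n$; $B_n,C_n$ have $\Z/2$ with $h=2n$; $D_n$ has exponent $4$ only when $n$ is odd, in which case $4\mid h=2n-2$; $E_6$ has $\Z/3$ with $h=12$; $E_7$ has $\Z/2$ with $h=18$), and the paper glosses over this point as well -- its fragmentary lemma on two-fold covers of $\C^\times$ restricted to $\mu_n$ runs into the same issue once the pushout from $\Z/2$ to all of $Z^\vee$ is taken. So your proof is complete modulo that citation and is, in substance, the argument the paper intends.
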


\vspace{2mm}

\vspace{4mm}

{\bf Example:} If $G= Sp_{2n}$ with roots $\{\pm e_i \pm e_j, \pm 2e_k, {\rm ~~~ for ~~~} 1 \leq i,j,k \leq n \}$, then 
$\{e_i \pm e_j, i< j, {\rm ~~~and ~~~} 2e_k\}$ can be 
taken for a set of positive roots, and hence $\rho = ne_1+\cdots + e_n$. Therefore $\rho(-1) = (-1)^{n(n+1)/2}$. Therefore $\rho(-1) = 1$ if 
$n \equiv 0,-1 \bmod 4$, and 
$\rho(-1) = -1$ if $n  \equiv 1,2 \bmod 4$.

On the other hand, an irreducible representation of the Spin group ${\rm Spin}_{2n+1}$ 
is always self-dual, and is always orthogonal if 
$n \equiv 0,-1 \bmod 4$, and for $n \equiv 1,2 \bmod 4$ it is orthogonal if and only if its central character is trivial, i.e., if it is a representation of 
$SO_{2n+1}$; these are well-known conclusions for the spinor representation of ${\rm Spin}_{2n+1}$.

Since $\rho = ne_1+\cdots + e_n$, we note that 
$$\wrho: \C^\times \rightarrow \wG = \SO_{2n+1}(\C),$$ is given by,

$$\left  ( \begin{array}{ccccccccc}
t^n& {} & {} & {}  &  {}   & {}  &{} & &  \\
{}& t^{n-1}  & {} & {}   &  {}  &  {}  & {} & &  \\
{}& {}  & \cdot  & {}  &   {} & {}   & {} &  &\\
{}& {}  &   & {t}  &   {} & {}   &{} &  &\\
{}& {}  &   & {}  &   {1} & {}   &{} &  &\\
{} &  {}  &   {}   &  &  & t^{-1} &  &   & \\
{}& {}  &   & {}  &   {} &   {\cdot} &    & &    \\
{} & {}   &  {}  & {}  &  & & t^{-(n-1)} & &  \\
{} & {} & {}  & {}  & {} & & & t^{-n}&
\end{array} \right)\cdot
$$

\vspace{2mm}

 This is clearly the restriction of the representation $(\det^{-n}){\rm Sym}^{2n} (\C \oplus \C)$ of $\PGL_2(\C)$
restricted to the diagonal matrix:$\left  ( \begin{array}{cc}  t & 0 \\ 0 & 1 \end{array} \right)
$, and $\wrho( e^{2\pi i /{2n}}) \in \SO_{2n+1}(\C)$ is the Coxeter conjugacy class. (Recall that the Coxeter number
for $\SO_{2n+1}(\C)$ is $2n$.)

\section{A lemma on torsion points of isogenous tori}
Let $$0 \longrightarrow Z \longrightarrow T \stackrel{\pi}\longrightarrow Q\longrightarrow 0,$$ 
be an exact sequence of groups over $\C$ with $T$ and $Q$ tori of the same dimension, with the dual exact sequence,
$$0 \longrightarrow Z^\vee \longrightarrow \wQ \stackrel{\wpi}\longrightarrow  \wT \longrightarrow 0.$$ 

For any integer $n \geq 1$, let 
$$T_n = \{ t \in T(\C) | n \pi(t) = 0 \in Q(\C)\}.$$   
Similarly define,
$$\wQ_n = \{ t \in \wQ(\C) | n \wpi(t) = 0 \in \wT(\C)\}.$$

\begin{lemma} The character group of $T_n$ is isomorphic to $\wQ_n$ via an isomorphism which 
is  equivariant under those automorphisms of $T$ which preserve $Z$. \end{lemma}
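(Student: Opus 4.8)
The plan is to translate the statement into the combinatorics of the character and cocharacter lattices, where both $T_n$ and $\wQ_n$ become explicit finite abelian groups carrying a natural perfect pairing. First I would record the lattices: put $X=\Hom[T,\C^\times]$, $X'=\Hom[Q,\C^\times]$ and $Y=\Hom(X,\Z)$, $Y'=\Hom(X',\Z)$. Since $\Hom[-,\C^\times]$ is exact and contravariant on diagonalizable groups, the first sequence gives $X'\subseteq X$ with $X/X'\cong Z^\vee$, and dually $Y\subseteq Y'$ with $Y'/Y\cong Z$. Let $\langle\,,\,\rangle\colon Y\times X\to\Z$ be the tautological perfect pairing, extended $\Q$-bilinearly to $V:=Y\otimes\Q$ paired against $X\otimes\Q$ (note $Y\otimes\Q=Y'\otimes\Q$ and $X\otimes\Q=X'\otimes\Q$). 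Writing $T(\C)=Y\otimes\C^\times$ and identifying its torsion subgroup with $V/Y$ via $y\otimes e^{2\pi i q}\mapsto qy$, the map $\pi$ becomes the natural projection $V/Y\to V/Y'$ and $Q[n]=\tfrac1n Y'/Y'$; since $T_n=\pi^{-1}(Q[n])$ is finite it lies in the torsion, and one reads off $T_n=\tfrac1n Y'/Y$. The same computation on the dual side (with $\wpi$ induced by $X'\subseteq X$) gives $\wQ_n=\tfrac1n X/X'$.

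Next I would define the pairing $T_n\times\wQ_n\to\Q/\Z$ by $(a,c)\mapsto n\langle a,c\rangle\bmod\Z$ for representatives $a\in\tfrac1n Y'$ and $c\in\tfrac1n X$. Two short checks show it is well defined: changing $a$ by some $y\in Y$ alters the value by $n\langle y,c\rangle=\langle y,nc\rangle\in\Z$, as $nc\in X$; and changing $c$ by some $x'\in X'$ alters it by $\langle na,x'\rangle\in\Z$, as $na\in Y'$ and $x'\in\Hom(Y',\Z)$. It is nondegenerate because $n\langle a,c\rangle\in\Z$ for all $c\in\tfrac1n X$ forces $\langle a,X\rangle\subseteq\Z$, i.e. $a\in Y$, and symmetrically on the right. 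As $|T_n|=|\wQ_n|=n^{\dim T}\cdot|Z|$, the pairing is perfect, and composing with a fixed embedding $\Q/\Z\hookrightarrow\C^\times$ yields the desired isomorphism $\wQ_n\xrightarrow{\sim}\Hom[T_n,\C^\times]$ onto the character group of $T_n$.

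Finally, for equivariance I would observe that an automorphism $\sigma$ of $T$ fixing $Z$ descends to $Q$ and hence acts compatibly on $Y\subseteq Y'$ and, by dualizing, on $X'\subseteq X$; through the contravariant duality functor $T\mapsto\wT$ it acts on $\wQ_n\subseteq X\otimes\Q$ by the contragredient $(\sigma^*)^{-1}$. Because the pairing is assembled solely from $\langle\,,\,\rangle$, and $\langle\sigma_*a,(\sigma^*)^{-1}c\rangle=\langle a,c\rangle$, it is $\sigma$-invariant, so the induced isomorphism on character groups intertwines the two actions. The substantive input is the explicit identification $T_n=\tfrac1n Y'/Y$ and $\wQ_n=\tfrac1n X/X'$ in the first step; everything after is bookkeeping. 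The one place I expect to have to be careful is the duality convention in this last paragraph: one must verify that the action $t\mapsto\sigma t$ on $\Hom[T_n,\C^\times]$ corresponds under the pairing to the contragredient action on $\wQ_n$ and not to its inverse, a point that is forced only once the action of $\sigma$ on $\wQ$ coming from the functor $T\mapsto\wT$ is pinned down precisely.
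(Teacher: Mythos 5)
Your proof is correct, but it reaches the conclusion by a different mechanism than the proof the paper actually gives. The paper's argument is purely an identification of both sides with a common lattice quotient: it records $T_n^\vee \cong X^{*}(T)/nX^{*}(Q)$, then uses the $\Tor$-computation (tensoring an inclusion of cocharacter lattices with $\C^\times$) to get $T_n \cong X_{*}(Q)/nX_{*}(T)$, applies the same computation to the dual sequence to get $\wQ_n \cong X_{*}(\wT)/nX_{*}(\wQ) = X^{*}(T)/nX^{*}(Q)$, and concludes by matching the two quotients. You instead identify $T_n = \tfrac1n Y'/Y$ and $\wQ_n = \tfrac1n X/X'$ (the same computational content, done by hand rather than via $\Tor$) and then build an explicit perfect pairing $T_n \times \wQ_n \to \Q/\Z$, checking well-definedness, non-degeneracy, and equality of orders. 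Your route is closer in spirit to the fragment at the end of the source, where a pairing between $X^{*}(T)/nX^{*}(Q)$ and $X_{*}(Q)/nX_{*}(T)$ is sketched with ``the details of the non-degeneracy left to the reader''; you supply exactly those details, and you pair $T_n$ against $\wQ_n$ directly rather than pairing the two character groups. What the pairing buys you is a canonical, pinned-down isomorphism, which makes the equivariance statement a one-line naturality check (invariance of $\langle\,,\,\rangle$ under $\sigma_* \times (\sigma^{*})^{-1}$); in the paper's version the equivariance is implicit in the functoriality of the identifications and is not spelled out. The one point worth stating explicitly in your write-up is the surjectivity/size count hidden in ``non-degenerate plus equal cardinalities implies perfect,'' which you do address; with that, the argument is complete.
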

\begin{proof}
It is easy to see that the character group $T_n^\vee$ of $T_n$ is,

\begin{eqnarray}T_n^\vee &\cong & \frac{X^{*}(T)}{nX^{*}(Q)} 
\end{eqnarray}

Note that if there is an exact sequence,
$$0 \longrightarrow X_*(T_1) \longrightarrow X_*(T_2) \longrightarrow A\longrightarrow 0,$$ 
then tensoring this exact sequence by $\C^\times$, we get
$$0 \longrightarrow {\rm Tor}^1[A,\C^\times] \longrightarrow X_*(T_1)\otimes \C^\times \longrightarrow X_*(T_2) \otimes \C^\times \longrightarrow 0.$$ Identifying ${\rm Tor}^1[A,\C^\times] $ to $A$ and $X_*(T) \otimes \C^\times $ to $T(\C)$, we get an exact sequence,
$$0 \longrightarrow A \longrightarrow T_1(\C) \longrightarrow T_2(\C) \longrightarrow 0.$$ 

Using these considerations,  it follows that 
$$T_n \cong \frac{X_{*}(Q)}{nX_{*}(T)}.$$
Applying this conclusion to the exact sequence,
$$0 \longrightarrow Z^\vee \longrightarrow \wQ \stackrel{\wpi}\longrightarrow  \wT \longrightarrow 0,$$ 

\begin{eqnarray}\wQ_n &\cong& \frac{X_{*}(\wT)}{nX_{*}(\wQ)}\cong \frac{X^{*}(T)}{nX^{*}(Q)}.
\end{eqnarray}

Identities $(1)$ and $(2)$ prove the Lemma.\end{proof}

\section{Proof of Kostant's theorem}

\begin{thm}Let $G$ be a semi-simple simply connected group over $\C$ with $\widehat{G}$
its dual group with a fixed maximal torus $T$ in $G$ and $\wT$ in $\widehat{G}$. Let $c(G)$ be the Coxeter conjugacy class in $G(\C)$ and
 $h$ the Coxeter number. Let $\pi_\lambda$ be a finite dimensional representation of 
$G(\C)$ with highest weight $\lambda: T \rightarrow \C^\times$.
Then the character of $\pi_\lambda$
at $c(G)$ is either $1,0,-1$, and it is nonzero if and only if 
$\widehat{\lambda\cdot \rho }(e^{2\pi i /h})$ 
is conjugate to $\widehat{\rho}(e^{2\pi i /h})$, 
both of them belonging to the Coxeter conjugacy class in $\wG$. 
If the character value is nonzero, then $\lambda$ must be trivial on the center of $G$, and therefore 
$\widehat{\lambda }(e^{2\pi i /h})$ makes sense as an element of $\widehat{T/Z}(\C)$. 
Lifting $\wrho (e^{2\pi i /h})$ from $\wT(\C)$  arbitrarily to $\widehat{T/Z}(\C)$, we have 
two elements $\widehat{\lambda\cdot \rho }(e^{2\pi i /h})$ and  $\widehat{\rho}(e^{2\pi i /h})$ in 
$\widehat{T/Z}(\C)$ (a maximal torus in the simply connected group $\wG_{sc}$) which are then  conjugate
to each other by a unique element $w_0$ of the Weyl group.  The sign of the
character of $\pi_\lambda$ at $c(G)$ is the sign of this element of the Weyl group.
\end{thm}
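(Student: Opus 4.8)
The plan is to write down the Weyl character formula at $c(G)$ and then transport the whole computation to the dual torus $\wT$, where the Proposition has already located the relevant points. First I would record the analogue of the Proposition for $G$ itself: by the same result of Kostant used there, the Coxeter class $c(G)$ is represented by $\exp\!\bigl(\tfrac{2\pi i}{h}\rho^\vee\bigr)$, where $\rho^\vee=\tfrac12\sum_{\alpha>0}\alpha^\vee$ is half the sum of the positive coroots of $G$, equivalently half the sum of the positive roots of $\wG$, so that $\rho^\vee=\rho_{\wG}$. It follows that for every weight $\mu\in X^{*}(T)$ one has $\mu(c(G))=e^{2\pi i\langle\mu,\rho^\vee\rangle/h}$.

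Using this, I would rewrite
$$\Theta_{\pi_\lambda}(c(G))=\frac{\sum_{w\in W}\operatorname{sgn}(w)\,(w(\lambda+\rho))(c(G))}{\sum_{w\in W}\operatorname{sgn}(w)\,(w\rho)(c(G))}=\frac{N(\lambda+\rho)}{N(\rho)},\qquad N(\mu):=\sum_{w\in W}\operatorname{sgn}(w)\,e^{2\pi i\langle w\mu,\rho^\vee\rangle/h}.$$
Invoking $W$-invariance of the pairing and $\rho^\vee=\rho_{\wG}$, I would identify $N(\mu)$ with the Weyl denominator $D_{\wG}$ of $\wG$ evaluated at the point $\widehat{\mu}(e^{2\pi i/h})=\mu\otimes e^{2\pi i/h}\in\wT(\C)$, namely $N(\mu)=\sum_{w}\operatorname{sgn}(w)\,(w\rho_{\wG})\bigl(\widehat{\mu}(e^{2\pi i/h})\bigr)=D_{\wG}\bigl(\widehat{\mu}(e^{2\pi i/h})\bigr)$. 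Thus $\Theta_{\pi_\lambda}(c(G))$ is the ratio of $D_{\wG}$ at $\widehat{\lambda\cdot\rho}(e^{2\pi i/h})$ to its value at $\wrho(e^{2\pi i/h})$; the latter is nonzero since $\wrho(e^{2\pi i/h})$ is the regular Coxeter element.

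I would then lift everything to $\wG_{sc}$, whose maximal torus is $\widehat{T/Z}$ and where $\rho_{\wG}$ becomes a genuine weight, so that $D_{\wG}$ is the honest product $\prod_{\hat\alpha>0}\bigl(\hat\alpha^{1/2}-\hat\alpha^{-1/2}\bigr)$ and vanishes exactly at the non-regular points. Writing $\eta:=\widehat{\lambda\cdot\rho}(e^{2\pi i/h})$ and $\xi:=\wrho(e^{2\pi i/h})$ and choosing lifts $\tilde\eta,\tilde\xi\in\widehat{T/Z}$ (lifting preserves regularity, since the roots of $\wG_{sc}$ are the pullbacks of those of $\wG$), I split on regularity. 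If $\eta$ is not regular the numerator vanishes and the value is $0$. If $\eta$ is regular, then $\eta^{h}=(\lambda+\rho)\otimes e^{2\pi i}=1$ shows the order of $\eta$ in $\wG$ divides $h$, and since $h$ is the minimal order of a regular class this order equals $h$; hence $\tilde\eta$ is a regular element of $\wG_{sc}$ whose image in $\wG$ has order $h$, and the same holds for $\tilde\xi$. The uniqueness statement — there is a unique such class in $\wG_{sc}$ — then forces $\tilde\eta$ and $\tilde\xi$ into one $W$-orbit, with a unique $w_0\in W$ satisfying $\eta=w_0\xi$ by regularity of $\xi$. As $D_{\wG}$ is $W$-alternating, i.e. $D_{\wG}(w_0 t)=\operatorname{sgn}(w_0)\,D_{\wG}(t)$, we get $\Theta_{\pi_\lambda}(c(G))=\operatorname{sgn}(w_0)\in\{+1,-1\}$, which together with the vanishing case yields values in $\{1,0,-1\}$. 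Finally, triviality of $\lambda$ on $Z$ when the value is nonzero follows at once from $c(G)$ and $z\,c(G)$ being conjugate for $z\in Z$, which gives $\lambda(z)\,\Theta_{\pi_\lambda}(c(G))=\Theta_{\pi_\lambda}(c(G))$; this is exactly what lets $\widehat{\lambda}(e^{2\pi i/h})$ be read inside $\widehat{T/Z}(\C)$ as in the statement.

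The only non-formal step is the uniqueness input: the passage from ``$\eta$ regular of order dividing $h$'' to ``$\eta$ conjugate to the Coxeter element.'' Everything preceding it is bookkeeping with the Weyl character formula under the character/cocharacter duality, but this is the geometric heart, and I expect to obtain it from the structure recorded in the Proposition (the principal $\PGL_2$, $\SL_2$ and the maps $\phi,\tilde\phi$) together with Kostant's analysis in [Ko2]. The simply connected cover $\wG_{sc}$ is indispensable here: it is only after lifting that ``regular of order $h$'' singles out one class, and it is also the lift that produces the well-defined sign $\operatorname{sgn}(w_0)$ governing the character value.
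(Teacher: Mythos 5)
Your argument is correct and follows essentially the same route as the paper: both reduce to the Weyl character formula at $c(G)$, transport the numerator to the dual torus via $\mu \mapsto \widehat{\mu}(e^{2\pi i/h})$, split on regularity of $\widehat{\lambda\cdot\rho}(e^{2\pi i/h})$ in $\wG_{sc}$, and invoke the uniqueness of the regular class whose image in $\wG$ has order $h$ to produce the sign $\mathrm{sgn}(w_0)$. The only difference is bookkeeping: the paper restricts $\lambda\cdot\rho$ to the finite subgroup $T_h$ and identifies it with a torsion point of $\widehat{T_{ad}}$ via the Lemma of Section 1, proving the vanishing by pairing cosets of $\langle s_\alpha\rangle$, whereas you evaluate the dual Weyl denominator at $\widehat{\lambda\cdot\rho}(e^{2\pi i/h})$ directly --- the same computation in dual clothing --- and, like the paper, you defer the key uniqueness input to Kostant's [Ko2].
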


\begin{proof} The proof of this theorem is a direct consequence of the Weyl character 
formula which we recall gives the character of an irreducible representation of $G$ with highest weight $\lambda$ as a quotient of what's called the Weyl numerator 
by the Weyl denominator. Let $\rho$ be half the sum of positive roots. Then the 
Weyl numerator at an element $t$ of the  maximal torus $T$ is: 
$$\sum_w (-1)^{\ell(w)}(\lambda \cdot \rho )^w(t);$$ 
the Weyl denominator is the same as the Weyl numerator but for $\lambda = 1$. If 
$t$ is regular element of $T$ (such as the Coxeter element) then the Weyl denominator is non-zero, and therefore
the character at a regular element is zero if and only if the Weyl numerator is zero.

The character $\lambda\cdot \rho$ when restricted to $T_h = \{t \in T| h \pi(t) = 0 \}$ for $\pi: T \rightarrow T/Z= T_{ad}$ gives rise to a character of $T_h$ which can then be identified by Lemma 1 
to an element of $T_{ad, h}^\vee = \{t \in \widehat{T_{ad}}| h \pi^\vee(t) = 0\}$. If this element of $T_{ad, h}^\vee$ 
(recall that $\widehat{T}_{ad}$ is a maximal torus in the simply connected group $\wG_{sc}$)
turns out to be a singular element, 
 i.e., 
fixed by a non-trivial reflection $s_{\alpha}$, then for $W_0=\langle s_\alpha\rangle$,
$$(\lambda\cdot \rho)^{s_\alpha}(t) =  (\lambda\cdot \rho)(t),$$
for all elements $t \in T_h$, in particular for the Coxeter element $c(G)$ in $G$.   It follows that, 
$$\sum_{w \in W} (-1)^{\ell(w)}(\lambda \cdot \rho )^w(t) = \sum_{w\in W/W_0} \sum_{w'\in W_0} (-1)^{\ell(ww')}(\lambda \cdot \rho )^{ww'}(t) =0.$$ 
 
On the other hand, if the character $\lambda\cdot \rho$ when restricted to $T_h$ gives rise to a 
character of $T_h$ which when  identified
to an element of $T_{ad,h}^\vee$ is a regular element of $\widehat{T_{ad}}$, then there being a unique regular element 
in $T_{ad,h}^\vee$ up to conjugacy by $W$ which is in fact represented by the character $\rho$ on $T_h$ considered as an element of 
$T_{ad,h}^\vee$,  the character on $T_h$ defined by $(\lambda\cdot \rho)$ is the same as that defined by $\rho^{w_0}$ for a unique element 
$w_0$ of $W$. Therefore,
$$\sum_w (-1)^{\ell(w)}(\lambda \cdot \rho )^w(c(G)) = (-1)^{w_0}\sum_{W}  (-1)^{\ell(w)}(\rho )^{w}(c(G)) .$$ 
Therefore,
$$\Theta_{\pi_\lambda}(c(G))= (-1)^{w_0}.$$\end{proof}

\vspace{1cm}

{\bf Remark :} Given that the dual group does not really make its presence in theory of finite dimensional
representations of compact Lie groups, it may be worth emphasizing that the results of this paper ---all about
finite dimensional representations of compact Lie groups--- need the dual group in an essential way.

\vspace{2cm}

{\bf \Large Bibliography}

\vspace{1cm}

[Ko] Kostant, Bertram:
{\it On Macdonald's $\eta$-function formula, the Laplacian and generalized exponents.}
Advances in Math. 20 (1976), no. 2, 179--212. 

\vspace{3mm}

[Ko2] Kostant, Bertram: {\it The Principal Three Dimensional Subgroup and the Betti Numbers of a Complex Simple Lie Group}, American J. of 
Math. 81 (1959), 973-1032.

\end{document}